\theoremstyle{plain}
\newtheorem{theorem}{Theorem}
\newtheorem*{theorem*}{Theorem}
\newtheorem{lemma}{Lemma}
\newtheorem*{lemma*}{Lemma}
\theoremstyle{definition}
\newtheorem*{definition*}{Definition}
\theoremstyle{remark}
\newtheorem{remark}{Remark}
\newtheorem*{remark*}{Remark}
\newtheorem{conjecture}{Conjecture}
\newtheorem*{statement*}{Statement}
\begin{document}
\title[Generalizations of the Salem function]{One modification of the Salem function}

\author{Symon Serbenyuk}

\subjclass[2010]{11K55, 11J72, 26A27, 11B34,  39B22, 39B72, 26A30, 11B34.}

\keywords{ Salem function, systems of functional equations,  complicated local structure}

\maketitle
\text{\emph{simon6@ukr.net}}\\
\text{\emph{Kharkiv National University of Internal Affairs, Ukraine }}
\begin{abstract}

The present article is devoted to one example which related to the Salem function. The main attention is given to properties of one type of functions including items related to functional equations, graphs, the Lebesgue integral, etc.

\end{abstract}

\section{Introduction}

Fractal sets \cite{sets2, sets1, sets} and  functions with complicated local structure and are pathological objects in real analysis. A class of hese functions contains singular (for example, \cite{{Minkowski}, {Salem1943}, {S.Serbenyuk 2017}, {Zamfirescu1981}}),  nowhere monotonic \cite{Symon2017, Symon2019}, and nowhere differentiable functions  (for example, see \cite{{Bush1952}, {Serbenyuk-2016}}, etc.).  An interest in such functions can be explained by their  connection with modelling  real objects, processes, and phenomena (in physics, economics, technology, etc.) and with different areas of mathematics (for example, see~\cite{ACFS2011, BK2000, Kruppel2009, OSS1995, Symon2021,   Symon21, Symon21-1, Sumi2009, Takayasu1984, TAS1993}). For modeling such functions various numeral systems and operators (auxiliary functions) are used (\cite{Renyi1957, S.Serbenyuk, preprint1-2018, preprint2019}, etc.).

Researchers are trying to find simpler examples of singular  functions. For example (see \cite{S. Serbenyuk systemy rivnyan 2-2} and references therein), in 1830, the first example of a continuous  non-differentiable  function was modeled by Bolzano in  ``Doctrine on Function" but the last paper was published one   hundred years later.  Brief historical remarks on  functions with complicated local structure are given in~\cite{{ACFS2017}, {S. Serbenyuk systemy rivnyan 2-2}}.

One of the simplest examples of singular functions was introduced by Salem in \cite{Salem1943}. The Salem  function is a function of the following form:
$$
S(x)=S\left(\Delta^q _{i_1i_2...i_k...}\right)=\beta_{i_1}+ \sum^{\infty} _{k=2} {\left(\beta_{i_k}\prod^{k-1} _{r=1}{p_{i_r}}\right)}=y=\Delta^{P_q} _{i_1i_2...i_k...},
$$
where $q>1$ is a fixed positive integer, $p_j>0$ for all $j=\overline{0,q-1}$, and $p_0+p_1\dots + p_{q-1}=1$. Here
$$
\Delta^q _{i_1i_2...i_k...}:=\sum^{\infty} _{k=1}{\frac{i_k}{q^k}}, ~~~i_k\in\{0, 1, 2, 3, \dots , q-1 \}.
$$
That is, any value (an arbitrary number from $[0,1]$) of the Salem function can be represented by the number notation $\Delta^{P_q} _{i_1i_2...i_k...}$ for a fixed positive integer $q>1$. The last representation is called \emph{the $P_q$-representation of $x$}. In this paper, the main attention is given to a function whose arguments  represented by the $P_3$-representation.

One can remark that generalizations of the Salem function can be singular, non-differentiable functions,  or those that do not have a derivative on a certain set. There are many researches which are devoted to the Salem function and its generalizations or modifications in terms of various representations of an argument (for example, see \cite{ACFS2017, Kawamura2010, Symon2015, Symon2017, Symon2019, Symon2021, Symon2023} and references in these papers). 

 In 2012,  the author of this article presented at  the International Scientific Conference ``Asymptotic Methods in the Theory of Differential Equations" dedicated to 80th anniversary of M.~I.~Shkil~\cite{S. Serbenyuk abstract 6} and published in the paper\footnote{ The working paper available at   https://www.researchgate.net/publication/314409844, as well as the version of the last-mentioned published paper into English available at https://arxiv.org/pdf/1703.02820.pdf} \cite{Symon12(2)} the following function:
\begin{equation}
\label{ff1}
x=\Delta^{3} _{i_{1}i_{2}...i_{k}...}\stackrel{f}{\rightarrow} \Delta^{3} _{\theta(i_{1})\theta(i_{2})...\theta(i_{k})...}=f(x)=y,
\end{equation}
where $\Delta^{3} _{i_{1}i_{2}...i_{n}...}$ is the ternary  representation of  $x \in [0,1]$ and values of this function are obtained from the ternary representation of the argument by the following change of digits: 0 by 0, 1 by 2, and 2 by 1. That is,  this function preserves the ternary digit $0$. Also, numbers, whose  ternary representation has the period $(2)$ (without the number $1$), are not used under the consideration of $f$.  

\begin{remark}
\label{Remark1} 
In 2012, in  the mentioned papers \cite{{S. Serbenyuk abstract 6}, {Symon12(2)}}, it is noted that one can define $m=3!=6$ functions  determined on $[0,1]$ in terms of the ternary numeral system by the following way:
$$
\Delta^{3} _{i_{1}i_{2}...i_{k}...}\stackrel{f_m}{\rightarrow} \Delta^{3} _{\theta_m(i_{1})\theta_m(i_{2})...\theta_m(i_{k})...},
$$
where the function $\theta_m(i_k)$ determined on an alphabet of the ternary numeral system and $f_m (x)$ is defined by the following table for each $m=\overline{1,6}$. 
\begin{center}
\begin{tabular}{|c|c|c|c|}
\hline
 &$ $ 0 &$ 1 $ & $2$\\
\hline
$\theta_1 (i_k) $ &$0$ & $1$ & $2$\\
\hline
$\theta_2 (i_k) $ &$0$ & $2$ & $1$\\
\hline
$\theta_3 (i_k) $ &$1$ & $0$ & $2$\\
\hline
$\theta_4 (i_k) $ &$1$ & $2$ & $0$\\
\hline
$\theta_5 (i_k) $ &$2$ & $0$ & $1$\\
\hline
$\theta_6 (i_k) $ &$2$ & $1$ & $0$\\
\hline
\end{tabular}
\end{center}
That is, one can model a class of functions whose values  are obtained from the ternary representation
of the argument by  a certain  change of ternary digits. It is easy to see that the function $f_1 (x)$ is the function  $y=x$ and the function $f_6 (x)$ is the function $y=1-x$, i.e.,
$$
y=f_1(x)=f_1\left(\Delta^3 _{i_1i_2\ldots i_k\ldots}\right)=\Delta^3 _{i_1i_2\ldots i_k \ldots}=x,
$$
$$
y=f_6(x)=f_6\left(\Delta^3 _{i_1i_2\ldots i_k\ldots}\right)=\Delta^3 _{[2-i_1][2-i_2]\ldots  [2-i_k]\ldots}=1-x.
$$

Generalizations of such functions for the $q$- and nega-$q$- representations of arguments, are investigated in \cite{{Serbenyuk-2016}}. 

For the function $y=f_2(x)$, which argument and values are defined by  the ternaryy numeral system, one can note the following properties (\cite{{S. Serbenyuk abstract 6}, {Symon12(2)}, {S. Serbenyuk systemy rivnyan 2-2}}). The function  $f_2$:
\begin{itemize}
\item  is  not monotonic on the domain of definition;

\item  is continuous at ternary-irrational points, and ternary-rational points are points of discontinuity of the function;

\item  is non-differentiable;

\item has self-similar properties and its Lebesgue integral is equal to $\frac{1}{2}$;

\item   satisfies the following functional equation:
\begin{equation*}
f(x)-f(1-x)=x-\frac{1}{2}.
\end{equation*}
\end{itemize}
\end{remark}

The present paper is devoted to a function of the type of form \eqref{ff1} (in the other words, to the function $f_2$), which argument and values are represented by values of the Salem function for the case of $q=3$.

The paper is organized as follows. In Section~\ref{object}, the considered function is defined. Also, the main attention is given to its well-posedness, the contunuity, and certain map properties such as the set of invariant points, the bijectivity, and the monotonicity, etc., as well as to modelling the function by systems of functional equations, to the self-affinity of the graph, and to integral properties. In addition, the Section~\ref{derivative} is devoted to differential properies of the function. 

Finally, one can remark certain peculiarities of the present investigation which are that the present function has more complicated definition and techniques for proving its  properties than in the case of the ternary representation, as well as there are unknown relations of   a numeral system (in terms  of which are defined arguments and values  of the function) and some properties of  our function in comparison with the same properties of function~\eqref{ff1}, which is defined in terms of the other expansion of real numbers. Final remarks will be given in conclusions.

\section{The main object}
\label{object}

Let us consider a function of the form
\begin{equation}
\label{ff2}
x=\Delta^{P_3} _{i_{1}i_{2}...i_{k}...}\stackrel{f}{\rightarrow} \Delta^{P_3} _{\theta(i_{1})\theta(i_{2})...\theta(i_{k})...}=\beta_{\theta(i_1)}+ \sum^{\infty} _{k=2} {\left(\beta_{\theta(i_k)}\prod^{k-1} _{r=1}{p_{\theta(i_r)}}\right)}=f(x)=y,
\end{equation}
where $\theta(0)=0$, $\theta(1)=2$, and $\theta(2)=1$, as well as
$$
x= \Delta^{P_3} _{i_1i_2...i_k...}\equiv \beta_{i_1}+ \sum^{\infty} _{k=2} {\left(\beta_{i_k}\prod^{k-1} _{r=1}{p_{i_r}}\right)}.
$$
Here $i_k \in\{0, 1, 2\}$ and $p_{i_k}\in (0, 1)$ for any $k\in\mathbb N$, as well as $p_0+p_1+p_2=1$.

\begin{remark}
Numbers of the form
$$
\Delta^{P_3} _{i_{1}i_{2}...i_{m-1}i_m000...}:=\Delta^{P_3} _{i_{1}i_{2}...i_{m-1}i_m(0)}=\Delta^{P_3} _{i_{1}i_{2}...i_{m-1}[i_m-1](2)}:=\Delta^{P_3} _{i_{1}i_{2}...i_{m-1}[i_m-1]222...},
$$
where $i_m\ne 0$, are called \emph{$P_3$-rational}. That is, $P_3$-rational are the following numbers:
$$
x^{'} _1=\Delta^{P_3} _{i_{1}i_{2}...i_{m-1}1(0)}=\Delta^{P_3} _{i_{1}i_{2}...i_{m-1}0(2)}=x^{''} _1,
$$
$$
x^{'} _2=\Delta^{P_3} _{i_{1}i_{2}...i_{m-1}2(0)}=\Delta^{P_3} _{i_{1}i_{2}...i_{m-1}1(2)}=x^{''} _2.
$$
\end{remark}

\begin{lemma}
Values of the function $f$ for different representations of $P_3$-rational numbers are different.
\end{lemma}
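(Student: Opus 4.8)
The plan is to take each of the two $P_3$-rational numbers in the remark, apply $f$ separately to its two $P_3$-representations, and show that the resulting values are distinct. The key structural observation is that under the digit change $\theta$ the trailing block $(0)$ is sent to $(0)$ while the trailing block $(2)$ is sent to $(1)$; in either case the image is again an eventually periodic $P_3$-expansion whose value I can write in closed form by summing a geometric series. Since the two representations of a fixed $P_3$-rational share the prefix $i_1 i_2\ldots i_{m-1}$, their $f$-images share the common initial summand $A:=\beta_{\theta(i_1)}+\sum_{k=2}^{m-1}\beta_{\theta(i_k)}\prod_{r=1}^{k-1}p_{\theta(i_r)}$ and the common factor $P:=\prod_{r=1}^{m-1}p_{\theta(i_r)}>0$, so it suffices to compare the tail contributions. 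Throughout I use $\beta_0=0$, $\beta_1=p_0$, and $\beta_2=p_0+p_1$.

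Concretely, for the first number I would compute $f(x'_1)$ from the transformed expansion $\theta(i_1)\ldots\theta(i_{m-1})\,2\,(0)$ and $f(x''_1)$ from $\theta(i_1)\ldots\theta(i_{m-1})\,0\,(1)$. Because $\beta_0=0$, the terminating block $(0)$ contributes nothing, giving $f(x'_1)=A+(p_0+p_1)P$. For $f(x''_1)$ the block $(1)$ produces a geometric series of ratio $p_{\theta(2)}=p_1$, whose single leading factor at position $m$ is $p_{\theta(0)}=p_0$; summing it yields $f(x''_1)=A+\frac{p_0^2}{p_0+p_2}P$. Hence
$$
f(x'_1)-f(x''_1)=P\left((p_0+p_1)-\frac{p_0^2}{p_0+p_2}\right)=\frac{P\,(p_0p_1+p_0p_2+p_1p_2)}{p_0+p_2}.
$$
An identical computation for $x'_2=\Delta^{P_3}_{i_1\ldots i_{m-1}2(0)}$ and $x''_2=\Delta^{P_3}_{i_1\ldots i_{m-1}1(2)}$ gives $f(x'_2)=A+p_0P$ and $f(x''_2)=A+(p_0+p_1)P+\frac{p_0p_2}{p_0+p_2}P$, so that $f(x''_2)-f(x'_2)=p_1P+\frac{p_0p_2}{p_0+p_2}P$.

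To finish, I observe that $p_0,p_1,p_2\in(0,1)$ are strictly positive, so $P>0$ and each of the quantities $p_0p_1+p_0p_2+p_1p_2$ and $p_1(p_0+p_2)+p_0p_2$ is strictly positive. Therefore both differences are nonzero (indeed $f(x'_1)>f(x''_1)$ and $f(x'_2)<f(x''_2)$), which proves that the values of $f$ on the two representations of a $P_3$-rational number are always distinct.

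I expect the only genuine obstacle to be the bookkeeping: correctly tracking which $p_i$ enters each factor $\prod_{r=1}^{k-1}p_{\theta(i_r)}$ after the switch point — the tail digit $2$ maps to $1$ and hence contributes repeated factors $p_1$, while the isolated digit at position $m$ contributes $p_{\theta(0)}=p_0$ or $p_{\theta(1)}=p_2$ — and then summing the geometric series correctly. Once the algebra is arranged, the conceptual content, namely that strict positivity of the $p_i$ forbids the two closed forms from coinciding, is immediate.
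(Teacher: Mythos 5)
Your proposal is correct and takes essentially the same approach as the paper: apply $\theta$ digitwise to each of the two $P_3$-representations and compare the values of the resulting expansions $\theta(i_1)\ldots\theta(i_{m-1})2(0)$ versus $\theta(i_1)\ldots\theta(i_{m-1})0(1)$, and $\theta(i_1)\ldots\theta(i_{m-1})1(0)$ versus $\theta(i_1)\ldots\theta(i_{m-1})2(1)$. You actually go further than the paper, which dismisses the inequality of these image values with ``it is easy to see'': your geometric-series closed forms and the strictly positive differences $f(x'_1)-f(x''_1)=\frac{P(p_0p_1+p_0p_2+p_1p_2)}{p_0+p_2}$ and $f(x''_2)-f(x'_2)=P\left(p_1+\frac{p_0p_2}{p_0+p_2}\right)$ are correct and supply exactly the verification the paper omits.
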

\begin{proof}
It is easy to see that
$$
f\left(x^{'} _1\right)=\Delta^{P_3} _{\theta(i_{1})\theta(i_{2})...\theta(i_{m-1})2(0)}\ne \Delta^{P_3} _{\theta(i_{1})\theta(i_{2})...\theta(i_{m-1})0(1)}=f\left(x^{''} _1\right)
$$
and
$$
f\left(x^{'} _2\right)=\Delta^{P_3} _{\theta(i_{1})\theta(i_{2})...\theta(i_{m-1})1(0)}\ne \Delta^{P_3} _{\theta(i_{1})\theta(i_{2})...\theta(i_{m-1})2(1)}=f\left(x^{''} _2\right).
$$
\end{proof}
\begin{remark}
\label{Remark3}
To reach that the function $f$ be well-defined on the set of $P_3$-rational numbers from $[0, 1]$, we will not consider the $P_3$-representations, which have period $(2)$ (without the number $1$).
\end{remark}

\begin{lemma}
The function $f$ has the following properties:
\begin{enumerate}
\item $f$ maps the closed interval $[0,1]$ into $[0,1]$ without a certain enumerable subset, i.e.,
$$
f: [0,1]\stackrel{f}{\rightarrow} [0,1] \setminus \left\{y: y=\Delta^{P_3} _{j_1j_2...j_k111...},~i_k\ne 1\right\}.
$$
\item $f$ has the unique invariant point and the equality $f(0)=0$ holds.
\item the function $f$ is not bijective on the domain.
\item $f$ is not a monotonic function on the domain.
\end{enumerate}
\end{lemma}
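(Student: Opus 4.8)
The plan is to reduce all four claims to the digit-wise action of the map $\theta$, which is an involution of the alphabet $\{0,1,2\}$ (indeed $\theta(0)=0$ and $\theta$ transposes $1$ and $2$, so $\theta\circ\theta=\mathrm{id}$). By Remark~\ref{Remark3} every $x\in[0,1)$ carries a unique admissible $P_3$-representation $(i_k)$, namely the one that is \emph{not} eventually equal to the period $(2)$; and the Salem-type coding $\Delta^{P_3}_{\cdot}$ is strictly increasing with respect to the lexicographic order on digit sequences (because $\beta_0=0<\beta_1=p_0<\beta_2=p_0+p_1$ and all $p_j>0$). With this dictionary in place, $f$ is simply ``apply $\theta$ to every digit of the admissible representation'', and each property becomes a statement about admissible sequences.

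For item (1), first note $f(x)=\Delta^{P_3}_{\theta(i_1)\theta(i_2)\ldots}$ is again a genuine $P_3$-sum with the same weights $p_j$ (merely permuted), so its value lies in $[0,1]$. To see that the image avoids the set $E:=\{\,y=\Delta^{P_3}_{j_1\ldots j_k111\ldots}\,\}$, I would use that $\theta(i)=1$ iff $i=2$; since the admissible $(i_k)$ is never eventually $(2)$, the coded sequence $(\theta(i_k))$ is never eventually $(1)$. A number whose representation has period $(1)$ is not $P_3$-rational (period $(1)$ is neither $(0)$ nor $(2)$), hence has a unique representation, so it cannot be produced by $f$; this gives $f([0,1))\subseteq[0,1]\setminus E$. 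For the reverse inclusion (surjectivity onto $[0,1]\setminus E$), given $y\in[0,1)\setminus E$ with admissible representation $(b_k)$, the sequence $(\theta(b_k))$ is admissible (not eventually $(2)$, because $(b_k)$ is not eventually $(1)$), so $x=\Delta^{P_3}_{\theta(b_1)\theta(b_2)\ldots}$ lies in the domain and $f(x)=y$ since $\theta$ is an involution. The only point needing separate comment is $x=1=\Delta^{P_3}_{(2)}$, whose sole representation has period $(2)$ and is excluded by Remark~\ref{Remark3}; this is the usual boundary subtlety.

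Items (2)--(4) then follow quickly. For (2), $f(0)=\Delta^{P_3}_{(0)}=0$ because $\theta(0)=0$. If $f(x)=x$, then the two representations $(\theta(i_k))$ and $(i_k)$ encode the same number, so either they coincide digit-wise, forcing $\theta(i_k)=i_k$ and hence $i_k=0$ for all $k$ (so $x=0$), or they are the two representations of a $P_3$-rational; the latter is impossible, since the admissible $(i_k)$ ends in period $(0)$ and $\theta$ fixes $0$, so $(\theta(i_k))$ also ends in $(0)$ rather than in the required period $(2)$. Thus $x=0$ is the unique invariant point. For (3), $E$ is a nonempty (countable) set, so $f$ is not onto $[0,1]$ and therefore not bijective as a self-map. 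For (4), I would exhibit $x_1=0<x_2=\Delta^{P_3}_{1(0)}=p_0<x_3=\Delta^{P_3}_{2(0)}=p_0+p_1$ and compute $f(x_1)=0$, $f(x_2)=\Delta^{P_3}_{2(0)}=p_0+p_1$, $f(x_3)=\Delta^{P_3}_{1(0)}=p_0$; since $f(x_1)<f(x_2)$ but $f(x_2)>f(x_3)$, the function $f$ is neither nondecreasing nor nonincreasing.

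The main obstacle is the representation bookkeeping concentrated in item (1): correctly tracking the double representations of $P_3$-rationals, verifying that the omitted values are \emph{exactly} those with period $(1)$, and handling the endpoint $x=1$. Once that dictionary is established, items (2)--(4) are routine digit computations.
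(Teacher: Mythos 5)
Your items (1), (2) and (4) are correct, and in fact more carefully argued than the paper itself, which disposes of (1) and (2) in one line each ("follows from Remark~\ref{Remark3}" / "from the definition") and proves (4) by the same digit-order-reversal observation you make explicit with the points $0$, $\Delta^{P_3}_{1(0)}$, $\Delta^{P_3}_{2(0)}$. Your characterization of the omitted values as exactly the period-$(1)$ tails, via "$\theta(i)=1$ iff $i=2$ and admissible inputs are never eventually $(2)$," is the intended content of property (1), and your exact-image (surjectivity) claim is a genuine bonus the paper does not assert.

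The gap is in item (3), and it is substantive. The paper proves non-bijectivity by showing $f$ is \emph{not injective}: it explicitly determines the collision sets
$G_1=\{x_1=\Delta^{P_3}_{c_1\ldots c_{k_0}2(0)},\ x_2=\Delta^{P_3}_{c_1\ldots c_{k_0}0(1)}\}$ and
$G_2=\{x_1=\Delta^{P_3}_{c_1\ldots c_{k_0}1(0)},\ x_2=\Delta^{P_3}_{c_1\ldots c_{k_0}2(1)}\}$,
on which $f(x_1)=f(x_2)$ with $x_1\ne x_2$. Indeed, $f\bigl(\Delta^{P_3}_{c2(0)}\bigr)=\Delta^{P_3}_{\theta(c)1(0)}$ and $f\bigl(\Delta^{P_3}_{c0(1)}\bigr)=\Delta^{P_3}_{\theta(c)0(2)}$, and these are the two representations of the \emph{same} $P_3$-rational number. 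Your involution dictionary ("$f$ is $\theta$ applied digit-wise to the admissible representation, $\theta\circ\theta=\mathrm{id}$") obscures precisely this: the output sequence, unlike the input, may land on the inadmissible side of a double representation, so value-collisions occur even though sequence-collisions do not. Your replacement argument — $f$ misses $E$, hence is not onto $[0,1]$, hence not bijective — is logically valid only under the reading that the codomain is all of $[0,1]$; but the lemma's item (1) frames the codomain as $[0,1]\setminus E$, and by your own surjectivity argument $f$ \emph{is} onto that set, so under the natural reading your item (3) proves nothing and is in tension with your item (1). Worse, your setup invites the false conclusion that $f$ is injective; the countable failure of injectivity on $G_1\cup G_2$ is the actual content of the paper's proof and is missing from yours.
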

\begin{proof}
\emph{The first property} follows from Remark~\ref{Remark3}. \emph{The second property} follows from the definition of $f$.

Let us prove \emph{the third property}. Suppose $x_1=\Delta^{P_3} _{i_{1}i_{2}...i_{k}...}$ and $x_2=\Delta^{P_3} _{j_{1}j_{2}...j_{k}...}$ are $P_3$-irrational and $x_1\ne x_2$.

Let us  find the following set
$$
\{x: f(x_1)=f(x_2), x_1 \ne x_2\}.
$$

If $y_0=f(x_1)=f(x_2)$ is $P_3$-irrational, then the following must be hold:
$$
y_0=\Delta^{P_3} _{\alpha_{1}\alpha_{2}...\alpha_{k}...}=\Delta^{P_3} _{\theta(i_{1})\theta(i_{2})...\theta(i_{k})...}=\Delta^{P_3} _{\theta(j_{1})\theta(j_{2})...\theta(j_{k})...}.
$$
Since the last equalities hold and $y_0$ is $P_3$-irrational, we have $x_1=x_2$ but this contradicts the condition $x_1\ne x_2$.

Suppose $y_0$ is $P_3$-irrational. Then there exists $k_0$ such that the condition $\theta(i_t)=\theta(j_t)$ holds for all $t=\overline{1,k_0}$, as well as
$$
y_{1,2}=\Delta^{P_3} _{\theta(i_1)\theta(i_2)...\theta(i_{k_0})\theta(i_{k_0+1})(0)}=\Delta^{P_3} _{\theta(j_1)\theta(\j_2)...\varphi(\beta_{n_0})(\theta(i_{k_0+1})-1)(2)}
$$
or
$$
y_{1,2}=\Delta^{P_3} _{\theta(j_1)\theta(j_2)...\theta(j_{k_0})\theta(j_{k_0+1})(0)}=\Delta^{P_3} _{\theta(i_1)\theta(i_2)...\theta(i_{k_0})(\theta(j_{k_0+1})-1)(2)}.
$$
That is,
$$
\left[
\begin{aligned}
\left\{
\begin{aligned}
\theta(i_{k_0+2})=\theta(i_{k_0+3})=...&=0\\
\theta(j_{k_0+2})=\theta(j_{k_0+3})=...&=2\\
\theta(j_{k_0+1})& = \theta(i_{k_0+1})-1\\
\end{aligned}
\right.\\
\left\{
\begin{aligned}
\theta(j_{k_0+2})=\theta(j_{k_0+3})=...&=0\\
\theta(i_{k_0+2})=\theta(i_{k_0+3})=...&=2\\
\theta(i_{k_0+1})& =\theta(j_{k_0+1}) -1.\\
\end{aligned}
\right.
\end{aligned}
\right.
$$

Whence,
$$
\left[
\begin{aligned}
\left\{
\begin{aligned}
i_{k_0+2}=i_{k_0+3}=...&=0\\
j_{k_0+2}=j_{k_0+3}=... & = 1\\
\left[
\begin{aligned}
\left\{
\begin{aligned}
i_{k_0+1}&=2\\
j_{k_0+1} & = 0\\
\end{aligned}
\right.\\
\left\{
\begin{aligned}
i_{k_0+1}&=1\\
j_{k_0+1} & = 2,\\
\end{aligned}
\right.\\
\end{aligned}
\right.\\
\end{aligned}
\right.\\
\left\{
\begin{aligned}
i_{k_0+2}=i_{k_0+3}=...&=1\\
j_{k_0+2}=j_{k_0+3}=... & = 0\\
\left[
\begin{aligned}
\left\{
\begin{aligned}
i_{k_0+1}&=0\\
j_{k_0+1} & = 2\\
\end{aligned}
\right.\\
\left\{
\begin{aligned}
i_{k_0+1}&=2\\
j_{k_0+1} & = 1.\\
\end{aligned}
\right.\\
\end{aligned}
\right.\\
\end{aligned}
\right.\\
\end{aligned}
\right.
$$

So, $f(x_1)=f(x_2)$ holds under the condition $x_1 \ne x_2$ on the following sets:
$$
 G _1=\{x: x_1=\Delta^{P_3} _{c_1c_2...c_{k_0}2(0)} \wedge x_2=\Delta^{P_3} _{c_1c_2...c_{k_0}0(1)}\}
$$
and
$$
 G _2=\{x: x_1=\Delta^{P_3} _{c_1c_2...c_{k_0}1(0)} \wedge x_2=\Delta^{P_3} _{c_1c_2...c_{k_0}2(1)}\},
$$
where $c_1, c_2,..., c_n$ are fixed digits and   $k_0 \in \mathbb Z_0=\mathbb N \cup \{0\}$. The set
$$
G=G_1 \cup G_2 
$$
is  enumerable. 

\emph{The fourth property} follows from the following. Suppose $x_1<x_2$, as well as  $x_1=\Delta^{P_3} _{i_{1}i_{2}...i_{k}...}$ and $x_2=\Delta^{P_3} _{j_{1}j_{2}...j_{k}...}$, i.e., there exists $k_0$ such that $i_r=j_r$ for all $r=\overline{1,k_0}$ and $i_{k_0+1}<j_{k_0+1}$. Considering $f(x_1)$ and$f(x_2)$, one can note the following: the equality $\theta(i_r)=\theta(j_r)$ holds for all $r=\overline{1,k_0}$ but there are $i_{k_0+1}$ and $j_{k_0+1}$ from $\{0,1,2\}$ such that $\theta(i_{k_0+1})<\theta(j_{k_0+1})$ or $\theta(i_{k_0+1})<\theta(j_{k_0+1})$ as well. For example, 
$$
i_{k_0+1}=1<2=j_{k_0+1},  ~~~~~~~~\theta(i_{k_0+1})=2>1=\theta(j_{k_0+1}),
$$
$$
i_{k_0+1}=0<2=j_{k_0+1},  ~~~~~~~~\theta(i_{k_0+1})=0<1=\theta(j_{k_0+1}).
$$
So, $f$ is not monotonic. 
\end{proof}

\begin{lemma}
The function   $f$ is continuous at $P_3$-irrational  points, and the $P_3$-rational  points are points of discontinuity of the function.
\end{lemma}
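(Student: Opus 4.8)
The plan is to handle the two assertions separately, in both cases exploiting the cylinder structure of the $P_3$-representation, namely that the set of numbers whose first $n$ $P_3$-digits are a fixed string $i_1\ldots i_n$ is a closed interval with endpoints $\Delta^{P_3}_{i_1\ldots i_n(0)}$ and $\Delta^{P_3}_{i_1\ldots i_n(2)}$ of length $\prod_{r=1}^{n}p_{i_r}$.

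First I would prove \emph{continuity at a $P_3$-irrational point} $x_0=\Delta^{P_3}_{i_1i_2\ldots}$. Writing $\rho=\max\{p_0,p_1,p_2\}<1$, both the rank-$n$ cylinder of $x_0$ and its $f$-image (the cylinder $\theta(i_1)\ldots\theta(i_n)$) have lengths $\prod_{r=1}^{n}p_{i_r}\le\rho^n$ and $\prod_{r=1}^{n}p_{\theta(i_r)}\le\rho^n$, both tending to $0$. Since $x_0$ is $P_3$-irrational, it differs from both (rational) endpoints of every rank-$n$ cylinder, so it lies in the open cylinder interval; hence there is $\delta_n>0$ such that every $x$ with $|x-x_0|<\delta_n$ has the same first $n$ digits $i_1\ldots i_n$ as $x_0$, and therefore $f(x)$ has the same first $n$ digits $\theta(i_1)\ldots\theta(i_n)$ as $f(x_0)$. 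Both $f(x)$ and $f(x_0)$ then lie in one image cylinder, so $|f(x)-f(x_0)|\le\prod_{r=1}^{n}p_{\theta(i_r)}$. Given $\varepsilon>0$, I pick $n$ with $\prod_{r=1}^{n}p_{\theta(i_r)}<\varepsilon$ and take $\delta=\delta_n$; this gives continuity at $x_0$.

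Next I would prove \emph{discontinuity at a $P_3$-rational point}. By Remark~\ref{Remark3} such a point is written $x_0=\Delta^{P_3}_{i_1\ldots i_{m-1}i_m(0)}$ with $i_m\ne0$, so $f(x_0)=\Delta^{P_3}_{\theta(i_1)\ldots\theta(i_{m-1})\theta(i_m)(0)}$. I approach $x_0$ from the left through its suppressed period-$(2)$ form $x_0=\Delta^{P_3}_{i_1\ldots i_{m-1}[i_m-1](2)}$: let $x^{(n)}$ be obtained by replacing that period $(2)$ with exactly $n$ twos followed by period $(0)$. Then $x^{(n)}\uparrow x_0$, and since $\theta(2)=1$, $f(x^{(n)})=\Delta^{P_3}_{\theta(i_1)\ldots\theta(i_{m-1})\theta(i_m-1)\,1\ldots1\,(0)}$ (with $n$ ones), so $f(x^{(n)})\to\Delta^{P_3}_{\theta(i_1)\ldots\theta(i_{m-1})\theta(i_m-1)(1)}$. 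Because the prefixes $\theta(i_1),\ldots,\theta(i_{m-1})$ coincide and a tail of period $(1)$ contributes $\frac{\beta_1}{1-p_1}=\frac{p_0}{1-p_1}$, the jump is
\[
f(x_0)-\lim_{n\to\infty}f\bigl(x^{(n)}\bigr)=\Bigl(\prod_{r=1}^{m-1}p_{\theta(i_r)}\Bigr)\Bigl(\beta_{\theta(i_m)}-\beta_{\theta(i_m-1)}-p_{\theta(i_m-1)}\frac{p_0}{1-p_1}\Bigr).
\]
The prefix product is strictly positive, and evaluating the bracket in the two admissible cases $i_m=1$ (using $\theta(1)=2$, $\theta(0)=0$, $\beta_0=0$, $\beta_2=p_0+p_1$) and $i_m=2$ (using $\theta(2)=1$, $\theta(1)=2$, $\beta_1=p_0$) shows it is nonzero—positive in the first case and negative in the second—so $\lim_n f(x^{(n)})\ne f(x_0)$ and $f$ is discontinuous at $x_0$.

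The routine parts are the geometric-series evaluation of the period-$(1)$ tail and the nesting-of-cylinders estimate. The main obstacle is the second part: one must confirm that the left-hand limit—which is exactly the $f$-image of the forbidden period-$(2)$ representation of $x_0$, now carrying period $\theta(2)=(1)$—never coincides with $f(x_0)$ (period $(0)$), i.e. that the bracket above cannot vanish for any admissible $p_0,p_1,p_2\in(0,1)$ with $p_0+p_1+p_2=1$. This is precisely the phenomenon already recorded in the lemma on distinct values of $f$ for distinct representations of $P_3$-rational numbers, and the two explicit sign computations for $i_m\in\{1,2\}$ settle it once and for all.
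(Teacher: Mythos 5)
Your proposal is correct and follows essentially the same route as the paper: continuity at $P_3$-irrational points via the bound $|f(x)-f(x_0)|\le\prod_{r=1}^{n}p_{\theta(i_r)}\to 0$ when the first $n$ digits agree, and discontinuity at $P_3$-rational points via the left-hand limit $\Delta^{P_3}_{\theta(i_1)\ldots\theta(i_{m-1})\theta(i_m-1)(1)}$ versus $f(x_0)=\Delta^{P_3}_{\theta(i_1)\ldots\theta(i_{m-1})\theta(i_m)(0)}$. Your only addition is the explicit jump formula with the sign check in the cases $i_m=1$ and $i_m=2$ (both computations are correct, since $\beta_2-\beta_0-p_0^2/(p_0+p_2)>p_1>0$ and $\beta_1-\beta_2-p_0p_2/(p_0+p_2)<-p_1<0$), which verifies the inequality of the two one-sided limits that the paper asserts without computation.
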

\begin{proof}
Let $x_0=\Delta^{P_3} _{i_1i_2...i_k...}$ be an arbitrary $P_3$-irrational number from $[0,1]$.
 Let $x=\Delta^{P_3} _{j_1j_2...j_k...}$ be a  $P_3$-irrational number such that the condition $i_{r}=j_{r}$ holds  for all $j=\overline{1,k_0}$ and $i_{k_0+1}\ne j_{k_0+1}$, where $k_0$ is a certain positive integer. 
Since $f$ is a bounded function, $0\le f(x) \le 1$, we obtain $g(x)-g(x_0)=$
$$
=\beta_{\theta(i_1)}+\sum^{m} _{k=2}{\left(\beta_{\theta(i_k)}\prod^{k-1} _{t=1}{p_{\theta{(i_t)}}}\right)}+\prod^{k_0} _{u=1}{p_{\theta(i_u)}}\left(\beta_{\theta(i_{k_0+1})}+\sum^{\infty} _{s=k_0+2}{\left(\beta_{\theta(i_s)}\prod^{s-1} _{l=k_0+1}{p_{\theta(i_l)}}\right)}\right)
$$
$$
-\beta_{\theta(i_1)}-\sum^{m} _{k=2}{\left(\beta_{\theta(i_k)}\prod^{k-1} _{t=1}{p_{\theta{(i_t)}}}\right)}-\prod^{k_0} _{u=1}{p_{\theta(i_u)}}\left(\beta_{\theta(j_{k_0+1})}+\sum^{\infty} _{s=k_0+2}{\left(\beta_{\theta(j_s)}\prod^{s-1} _{l=k_0+1}{p_{\theta(j_l)}}\right)}\right)
$$
$$
=\prod^{k_0} _{u=1}{p_{\theta(i_u)}}\left(\beta_{\theta(i_{k_0+1})}+\sum^{\infty} _{s=k_0+2}{\left(\beta_{\theta(i_s)}\prod^{s-1} _{l=k_0+1}{p_{\theta(i_l)}}\right)}-\beta_{\theta(j_{k_0+1})}-\sum^{\infty} _{s=k_0+2}{\left(\beta_{\theta(j_s)}\prod^{s-1} _{l=k_0+1}{p_{\theta(j_l)}}\right)}\right)
$$
$$
\le (1-0)\prod^{k_0} _{u=1}{p_{\theta(i_u)}}=\prod^{k_0} _{u=1}{p_{\theta(i_u)}}.
$$
Hence
$$
\lim_{k_0\to \infty}{|f(x)-f(x_0)|}=\lim_{k_0\to \infty}{\prod^{k_0} _{u=1}{p_{\theta(i_u)}}}\le\lim_{k_0\to \infty}{\left(\max\{p_0, p_1, p_2\}\right)^{k_0}}=0.
$$

So, $\lim_{x\to x_0}{f(x)}=f(x_0)$, i.e., the function $f$ is continuous at any $P_3$-irrational point. 

Let $x_0$ be a $P_3$-rational number, i.e.,
$$
x_0=\Delta^{P_3} _{i_1i_2...i_{m-1}i_m(0)}=\Delta^{P_3} _{i_1i_2...i_{m-1}[i_m-1](2)},~~~i_m \ne 0.
$$
Then 
$$
\lim_{x \to x_0 -0} {f(x)}=\Delta^{P_3} _{\theta(i_1)\theta(i_2)...\theta(i_{m-1})\theta(i_m-1)(1)},
$$
$$
\lim_{x \to x_0 +0} {f(x)}=\Delta^{P_3} _{\theta(i_1)\theta(i_2)...\theta(i_{m-1})\theta(i_m)(0)}.
$$
So, $x_0$ is a point of discontinuity.
\end{proof}

Let us describe an auxiliary map $\sigma^n(x)$ which is called the shift operator and is a piecewise linear function. That is, 
$$
\sigma^n(x)=\sigma^n \left(\Delta^{P_3} _{i_1i_2...i_m...}\right)=\sigma^n \left(\Delta^{P_3} _{i_{n+1}i_{n+2}i_{n+3}...}\right)=\beta_{i_{n+1}}+\sum^{\infty} _{s=n+1}{\left(\beta_{i_s} \prod^{s-1} _{l=n+1}{p_{i_l}}\right)}.
$$

\begin{theorem}
Let $P_3=(0, 1, 2)$ be a fixed tuple of real numbers such that $p_t\in (0,1)$, where $t=0, 1, 2$, $\sum_t {p_t}=1$, and $0=\beta_0<\beta_t=\sum^{t-1} _{l=0}{p_l}<1$ for all $t\ne 0$. Then the following system of functional equations
\begin{equation}
\label{eq: system}
f\left(\sigma^{n-1}(x)\right)=\beta_{\theta(i_{n})}+p_{\theta(i_{n})}f\left(\sigma^n(x)\right),
\end{equation}
where $x=\Delta^{P_3} _{i_1i_2...i_k...}$, $n=1,2, \dots$, and $\sigma_0(x)=x$, has the unique solution
$$
f(x)=\beta_{\theta({i_1})}+\sum^{\infty} _{k=2}{\left(\beta_{\theta(i_{k})}\prod^{k-1} _{r=1}{p_{\theta(i_{r})}}\right)}
$$
in the class of determined and bounded on $[0, 1]$ functions. 
\end{theorem}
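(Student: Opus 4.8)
The plan is to prove the statement in two stages: first verify that the explicitly displayed function is a solution (existence), and then show that no other function bounded on $[0,1]$ can satisfy the system (uniqueness).

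For existence, I would substitute the candidate formula directly into the right-hand side of \eqref{eq: system}. Since $\sigma^n(x)=\Delta^{P_3}_{i_{n+1}i_{n+2}\dots}$, applying the definition of $f$ gives
$$
f\left(\sigma^n(x)\right)=\beta_{\theta(i_{n+1})}+\sum^{\infty}_{k=2}\beta_{\theta(i_{n+k})}\prod^{k-1}_{r=1}p_{\theta(i_{n+r})}.
$$
Multiplying by $p_{\theta(i_n)}$ and adding $\beta_{\theta(i_n)}$ recovers, after reindexing, exactly the series expansion of $f\left(\sigma^{n-1}(x)\right)=f\left(\Delta^{P_3}_{i_ni_{n+1}\dots}\right)$. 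This is a routine check once the bookkeeping of the products $\prod p_{\theta(i_r)}$ is aligned; convergence of all series involved is guaranteed because $\prod^{k-1}_{r=1}p_{\theta(i_r)}\le\left(\max\{p_0,p_1,p_2\}\right)^{k-1}$ and $\beta_{\theta(i_k)}\le 1$, so the tail is dominated by a convergent geometric series.

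For uniqueness, suppose $g$ is any function defined and bounded on $[0,1]$ that satisfies \eqref{eq: system}. I would iterate the equation, starting from $n=1$ and repeatedly substituting the expression for $g$ of the shifted argument. After $m$ steps one obtains
$$
g(x)=\beta_{\theta(i_1)}+\sum^{m}_{k=2}\beta_{\theta(i_k)}\prod^{k-1}_{r=1}p_{\theta(i_r)}+\left(\prod^{m}_{r=1}p_{\theta(i_r)}\right)g\left(\sigma^m(x)\right).
$$
Since $g$ is bounded, say $|g|\le M$, and each $p_t\in(0,1)$, the remainder term satisfies
$$
\left|\left(\prod^{m}_{r=1}p_{\theta(i_r)}\right)g\left(\sigma^m(x)\right)\right|\le M\left(\max\{p_0,p_1,p_2\}\right)^m\to 0
$$
as $m\to\infty$. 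Passing to the limit forces $g(x)$ to equal the explicit series, and hence $g=f$.

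The main obstacle is the uniqueness step, specifically the justification that the remainder vanishes: this is where boundedness of the admissible solution class is essential. Without it, the recursion alone would not pin down the function, since the tail term need not tend to zero. The remaining parts are elementary index bookkeeping together with the estimate $\prod^{m}_{r=1}p_{\theta(i_r)}\le\left(\max\{p_0,p_1,p_2\}\right)^m$.
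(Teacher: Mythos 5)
Your proposal is correct and its uniqueness step — iterating the functional equation to obtain the partial sum plus the remainder $\left(\prod^{m}_{r=1}p_{\theta(i_r)}\right)g\left(\sigma^m(x)\right)$, then killing the remainder via boundedness and the bound $\left(\max\{p_0,p_1,p_2\}\right)^m\to 0$ — is exactly the paper's proof. Your additional existence verification (substituting the series into the system and reindexing) is a routine check that the paper leaves implicit, so the two arguments are essentially the same.
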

\begin{proof} Since the function $f$ is a determined and bounded  function on $[0,1]$, using system~\eqref{eq: system},  we get 
$$
f(x)=\beta_{\theta ({i_1})}+p_{\theta({i_1})}f(\sigma(x))=\beta_{\theta({i_1})}+p_{\theta({i_1})}(\beta_{\theta({i_2})}+p_{\theta({i_2})}f(\sigma^2(x)))=\dots
$$
$$
\dots =\beta_{\theta({i_1})}+\beta_{\theta({i_2})}p_{\theta({i_1})}+\beta_{\theta({i_3})}p_{\theta({i_1})}p_{\theta({i_2})}+\dots +\beta_{\theta{i_n}}\prod^{n-1} _{l=1}{p_{\theta({i_l})}}+\left(\prod^{n} _{r=1}{p_{\theta({i_r})}}\right)f(\sigma^n(x)).
$$
Since 
$$
\prod^{n} _{r=1}{p_{\theta(i_r)}}\le \left( \max\{p_0, p_1, p_2\}\right)^n\to 0, ~~~ n\to \infty,
$$
and
$$
\lim_{n\to\infty}{f(\sigma^n(x))\prod^{n} _{r=1}{p_{\theta({i_r})}}}=0,
$$
we have
$$
g(x)=\beta_{\theta(i_1)}+\sum^{\infty} _{k=2}{\left(\beta_{\theta({i_k})}\prod^{k-1} _{r=1}{p_{\theta({i_r})}}\right)}.
$$
\end{proof}

\begin{theorem}
Suppose
$$
\psi_t: \left\{
\begin{array}{rcl}
x^{'}&=&p_tx+\beta_t\\
y^{'} & = &p_{\theta(t)}y+\beta_{\theta(t)}\\
\end{array}
\right.
$$
are affine transformations for $t=0, 1, 2$ and $p_0, p_1, p_2 \in (0, 1)$. Then the graph $\Gamma$ of  $f$ is a self-affine set of $\mathbb R^2$, as well as
$$
\Gamma= \bigcup^{2} _{t=0}{\psi_t(\Gamma)}.
$$
\end{theorem}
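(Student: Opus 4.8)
The plan is to show that each affine map $\psi_t$ carries the whole graph $\Gamma$ onto exactly the portion of $\Gamma$ lying above the first-digit cylinder $\{x : i_1 = t\}$, and then to observe that these three pieces tile $\Gamma$. The entire argument rests on two recursion identities, one for the abscissa and one for the ordinate, both of which are immediate: the first from collecting the leading factor out of the $P_3$-series, the second from the functional equation \eqref{eq: system} specialised to $n=1$.

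First I would record the identity for the argument. Writing $\sigma(x) = \Delta^{P_3}_{i_2 i_3 \ldots}$ for the shift that deletes the leading digit and factoring $p_{i_1}$ out of the series gives
$$
x = \Delta^{P_3}_{i_1 i_2 \ldots} = \beta_{i_1} + p_{i_1}\,\sigma(x),
$$
so that, when $i_1 = t$, the first coordinate of $\psi_t$ returns $\sigma(x)$ to $x$. Simultaneously, equation \eqref{eq: system} with $n=1$ reads
$$
f(x) = \beta_{\theta(i_1)} + p_{\theta(i_1)}\, f(\sigma(x)),
$$
so the second coordinate of $\psi_t$ returns $f(\sigma(x))$ to $f(x)$. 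Combining the two, for every $x$ whose leading digit is $t$ one obtains
$$
\psi_t\bigl(\sigma(x),\, f(\sigma(x))\bigr) = \bigl(p_t \sigma(x) + \beta_t,\; p_{\theta(t)} f(\sigma(x)) + \beta_{\theta(t)}\bigr) = \bigl(x,\, f(x)\bigr).
$$
As $x$ ranges over the cylinder $i_1 = t$, its tail $\sigma(x)$ ranges over all of $[0,1]$, so the point $(\sigma(x), f(\sigma(x)))$ ranges over all of $\Gamma$. Hence $\psi_t(\Gamma)$ is precisely the restricted graph $\{(x, f(x)) : i_1 = t\}$.

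Finally, since every $x \in [0,1]$ possesses a $P_3$-representation whose first digit lies in $\{0,1,2\}$, the three cylinders cover the domain, so the three graph pieces $\psi_t(\Gamma)$ cover $\Gamma$, giving $\Gamma = \bigcup_{t=0}^{2} \psi_t(\Gamma)$; and because each $\psi_t$ is an affine contraction (the ratios $p_t,\,p_{\theta(t)}$ lie in $(0,1)$), this exhibits $\Gamma$ as a self-affine set. The one delicate point, which I expect to be the main obstacle, is the behaviour at the cylinder endpoints, i.e. at the $P_3$-rational abscissae where two consecutive first-digit blocks abut: there the two neighbouring representations must be reconciled with the convention of Remark~\ref{Remark3} (discarding the period $(2)$) so that the three pieces glue together consistently and the union is genuinely all of $\Gamma$ with neither omission nor duplication. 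Once that bookkeeping is settled, the identity follows from the two displayed recursions alone.
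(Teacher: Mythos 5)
Your proof is correct and takes essentially the same route as the paper's: both arguments rest on the decomposition $x=\beta_t+p_t\sigma(x)$ combined with the $n=1$ case of the functional equation \eqref{eq: system}, yielding $\psi_t\bigl(\sigma(x),f(\sigma(x))\bigr)=(x,f(x))$ and hence the two inclusions between $\Gamma$ and $\bigcup_{t}\psi_t(\Gamma)$. The endpoint subtlety you flag --- reconciling the two representations at $P_3$-rational abscissae under the convention of Remark~\ref{Remark3} --- is a genuine point, but the paper's own proof passes over it silently as well, so your treatment is no less complete than the original.
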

\begin{proof}
If $T(x_0, y_0)\in M\subset \Gamma$, then  $x_0=p_tx+\beta_t$ and $y_0=p_{\theta(t)} y+\beta_{\theta(t)}$ for some $t\in \{0, 1, 2\}$. Using the system of functional equations~\eqref{eq: system}, we get
$$
f(x_0)=\beta_{\theta(t)}+p_{\theta(t)}f(x)=y_0
$$ 
and $T\in \Gamma$.

Choose  $T(x_0, f(x_0))\in \Gamma$. Then $x_0=\beta_t+p_t \sigma(x_0)$, $f(x_0)=\beta_{\theta(t)}+p_{\theta(t)}f(\sigma(x_0))$, and $(\sigma(x_0), f(\sigma(x_0)))\in \Gamma$.

So, $\psi_t(\sigma(x_0), f(\sigma(x_0)))=(x_0, f(x_0))\in M$.
\end{proof}

\begin{theorem}
For the Lebesgue integral, the following equality holds:
$$
\int_{[0, 1]}{f(x)dx}=\frac{p^2 _1+p_0p_1+p_0p_2}{1-p^2 _0-2p_1p_2}.
$$
\end{theorem}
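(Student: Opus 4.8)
The plan is to exploit the self-affine structure already encoded in the functional-equation system~\eqref{eq: system}, which for $n=1$ reads $f(x)=\beta_{\theta(i_1)}+p_{\theta(i_1)}f(\sigma(x))$. The idea is to split $[0,1]$ into the three first-rank cylinders determined by the leading $P_3$-digit and to express the integral over each cylinder in terms of the whole integral $I:=\int_{[0,1]}f(x)\,dx$ by an affine change of variable, thereby reducing the problem to a single linear equation in $I$.

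First I would record that $f$ is Lebesgue integrable and that the exceptional points are harmless: $f$ is bounded ($0\le f\le 1$) and, by the preceding continuity lemma, continuous at every $P_3$-irrational point, so its set of discontinuities (the $P_3$-rational numbers, together with the countable set discarded in Remark~\ref{Remark3}) is countable and hence Lebesgue-null. Thus $f$ is measurable and bounded, so integrable, and that null set does not affect the value of $I$. Next, for each $t\in\{0,1,2\}$ let $\Delta_t=[\beta_t,\beta_t+p_t]$ be the cylinder of points whose first $P_3$-digit equals $t$; these have disjoint interiors, lengths $p_t$, and cover $[0,1]$. On $\Delta_t$ one has $x=\beta_t+p_t\,\sigma(x)$ with $\sigma(x)$ sweeping $[0,1]$, so the substitution $u=\sigma(x)$, $dx=p_t\,du$, combined with $f(x)=\beta_{\theta(t)}+p_{\theta(t)}f(u)$, gives
$$
\int_{\Delta_t} f(x)\,dx = p_t\beta_{\theta(t)} + p_t p_{\theta(t)}\,I .
$$
Summing over $t$ yields $I=\sum_{t=0}^{2}p_t\beta_{\theta(t)}+I\sum_{t=0}^{2}p_t p_{\theta(t)}$, that is, $I\bigl(1-\sum_{t}p_t p_{\theta(t)}\bigr)=\sum_{t}p_t\beta_{\theta(t)}$.

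Finally I would evaluate the two sums using $\theta(0)=0$, $\theta(1)=2$, $\theta(2)=1$ together with $\beta_0=0$, $\beta_1=p_0$, $\beta_2=p_0+p_1$: the coefficient of $I$ becomes $1-(p_0^2+2p_1p_2)$, while the right-hand side becomes $p_1^2+p_0p_1+p_0p_2$, and dividing produces the claimed formula. The only point that genuinely needs care — rather than a real obstacle — is the measure-theoretic justification of the cylinder decomposition and the change of variables in the presence of a countable set on which $f$ is undefined; once that null set is discarded, the scaling relation inside each $\Delta_t$ is exactly the self-affinity established in the preceding theorem, and the remaining computation is entirely routine.
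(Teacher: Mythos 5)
Your proposal is correct and follows essentially the same route as the paper: decompose $[0,1]$ into the three first-rank cylinders, apply the functional equation $f(x)=\beta_{\theta(t)}+p_{\theta(t)}f(\sigma(x))$ with the change of variable $dx=p_t\,d(\sigma(x))$, and solve the resulting linear equation $I=\sum_t p_t\beta_{\theta(t)}+I\sum_t p_tp_{\theta(t)}$ for $I$. Your explicit justification of integrability and the harmlessness of the countable exceptional set is a welcome addition that the paper leaves implicit, but it does not change the substance of the argument.
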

\begin{proof}
 Let us begin with some equalities which are useful for the future calculations:
$$
x=\beta_{i_1}+p_{i_1}\sigma(x)
$$
and
$$
dx=p_{i_1}d(\sigma(x)),
$$
as well as
$$
d(\sigma^{n-1} (x))=p_{i_n}d(\sigma^{n} (x))
$$
for all $n=1, 2, 3, \dots$

Let us calculate the Lebesgue integral
$$
I:=\int^1 _0{f(x)dx}=\sum^{2} _{t=0}{\int^{\beta_{t+1}} _{\beta_t}{f(x)dx}}=\sum^{2} _{t=0}{\int^{\beta_{t+1}} _{\beta_t}{(\beta_{\theta(t)}+p_{\theta(t)}f(\sigma(x))})dx}
$$
$$
=\beta_1p_2+\beta_2p_1+\sum^{2} _{t=0}{\int^{\beta_{t+1}} _{\beta_t}{p_{\theta(t)}f(\sigma(x))dx}}
$$
$$
=\beta_1p_2+\beta_2p_1+\sum^{2} _{t=0}{p_{\theta(t)}\int^{\beta_{t+1}} _{\beta_t}{p_tf(\sigma(x))d(\sigma(x))}}
$$
$$
=\beta_1p_2+\beta_2p_1+\sum^{2} _{t=0}{p_{\theta(t)}p_t\int^{\beta_{t+1}} _{\beta_t}{f(\sigma(x))d(\sigma(x))}}.
$$

Using self-affine properties of $f$, we get
$$
I=\beta_1p_2+\beta_2p_1+I\sum^{2} _{t=0}{p_{\theta(t)}p_t}.
$$

So,
$$
I=\frac{\beta_1p_2+\beta_2p_1}{1-p^2 _0-2p_1p_2}=\frac{p^2 _1+p_0p_1+p_0p_2}{1-p^2 _0-2p_1p_2}.
$$
\end{proof}

\section{Differential properties}
\label{derivative}

The function   $f$ is continuous at $P_3$-irrational  points, and the $P_3$-rational  points are points of discontinuity of the function.This section is devoted to the singularity of our function.

\begin{conjecture}
The function $f$ is a singular function at any $P_3$-irrational point.
\end{conjecture}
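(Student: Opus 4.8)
The plan is to read singularity in the usual sense --- $f$ is continuous, non-constant, and $f'(x)=0$ for Lebesgue-almost every $x$ --- and to locate the derivative through the cylinder structure of the $P_3$-representation. The decisive observation is that Lebesgue measure on $[0,1]$ coincides, in $P_3$-coordinates, with the Bernoulli measure that assigns weight $p_d$ to the digit $d$: the rank-$k$ cylinder $\Delta^{P_3}_{i_1\ldots i_k}$ has length $\prod_{r=1}^k p_{i_r}$. By the self-affinity proved above, $f$ carries this cylinder onto the image cylinder $\Delta^{P_3}_{\theta(i_1)\ldots\theta(i_k)}$, whose length is $\prod_{r=1}^k p_{\theta(i_r)}$. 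Writing $N_1(k)$ and $N_2(k)$ for the numbers of digits $1$ and $2$ among $i_1,\ldots,i_k$ and using $\theta(0)=0$, $\theta(1)=2$, $\theta(2)=1$, the ratio of these lengths is
\[
R_k(x):=\frac{\prod_{r=1}^k p_{\theta(i_r)}}{\prod_{r=1}^k p_{i_r}}=\exp\!\Big[(N_1(k)-N_2(k))\log\tfrac{p_2}{p_1}\Big].
\]

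Next I would invoke the strong law of large numbers for the independent digits, giving $N_1(k)/k\to p_1$ and $N_2(k)/k\to p_2$ for almost every $x$, so the exponent behaves like $k(p_1-p_2)\log(p_2/p_1)$. Since $(p_1-p_2)\log(p_2/p_1)<0$ whenever $p_1\ne p_2$, this forces $R_k(x)\to 0$ exponentially fast for almost every $x$. The degenerate case $p_1=p_2$, in which $|R_k|\equiv 1$ and the construction merely reverses orientation on cylinders, has to be excluded; there one expects non-differentiability rather than singularity, exactly as for the ternary prototype $f_2$. At this stage the slope of $f$ measured between the endpoints of the nested cylinders through $x$ already tends to $0$.

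Because $f$ is not monotone I cannot squeeze the difference quotient between consecutive cylinder slopes, so the main work is to bound $|x-x_0|$ from below for an \emph{arbitrary} sequence $x\to x_0$. If $x$ and the $P_3$-irrational point $x_0=\Delta^{P_3}_{i_1i_2\ldots}$ share exactly their first $k$ digits, then both images lie in the same rank-$k$ image cylinder, so $|f(x)-f(x_0)|\le\prod_{r=1}^k p_{\theta(i_r)}$, while $|x-x_0|\ge c\,m_k(x_0)\prod_{r=1}^k p_{i_r}$, where $c=\min\{p_0,p_1,p_2\}$ and $m_k(x_0)=\min\{\sigma^{k+1}(x_0),\,1-\sigma^{k+1}(x_0)\}$ is the normalized distance from $x_0$ to the boundary of its rank-$(k+1)$ cylinder. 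Hence
\[
\left|\frac{f(x)-f(x_0)}{x-x_0}\right|\le\frac{R_k(x_0)}{c\,m_k(x_0)}.
\]
A Borel--Cantelli estimate on the lengths of runs of the extreme digits $0$ and $2$ shows that for almost every $x_0$ one has $m_k(x_0)\ge k^{-C}$ for some constant $C=C(x_0)$ and all large $k$; since $R_k(x_0)$ decays exponentially, the quotient tends to $0$, proving $f'(x_0)=0$.

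Finally I would combine this with the already-established continuity at $P_3$-irrational points and the non-constancy of $f$ to conclude that $f$ is singular. The hardest point is the third step: the failure of monotonicity forces a direct attack on the derivative, and the lower bound for $|x-x_0|$ genuinely requires the Borel--Cantelli control of how often $x_0$ approaches a cylinder boundary. I should also stress that the statement cannot hold at \emph{every} $P_3$-irrational point: on the measure-zero set where $N_1(k)-N_2(k)$ stays bounded the ratio $R_k$ does not vanish, so singularity can only be asserted almost everywhere (and only when $p_1\ne p_2$), which is presumably why the assertion is recorded here as a conjecture.
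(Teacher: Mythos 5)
Your plan is correct in substance and follows the same skeleton as the discussion the paper itself attaches to this conjecture: both compute the ratio of the image-cylinder length to the cylinder length, reduce it to $\prod_{r=1}^{k}p_{\theta(c_r)}/p_{c_r}=(p_2/p_1)^{N_1(k)-N_2(k)}$, and make it vanish almost everywhere via digit frequencies (the paper invokes normal numbers, you invoke the strong law of large numbers for the i.i.d.\ digits under Lebesgue measure --- the same fact). But you go genuinely further on exactly the two points where the paper's sketch is deficient. First, the paper only evaluates increments over cylinders and along the special sequences $x_n$ obtained by changing a single digit of $x_0$; since $f$ is not monotone, neither of these controls the full difference quotient, and your third step --- the upper bound $|f(x)-f(x_0)|\le\prod_{r\le k}p_{\theta(i_r)}$ played against the lower bound $|x-x_0|\ge c\,m_k(x_0)\prod_{r\le k}p_{i_r}$, with the Borel--Cantelli estimate $m_k(x_0)\ge k^{-C}$ for almost every $x_0$ --- is precisely the missing bridge from cylinder slopes to $f'(x_0)=0$ along arbitrary sequences $x\to x_0$. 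Second, you correctly exclude the degenerate case $p_1=p_2$: there the ratio is identically $1$ and the conclusion fails (consistently with the prototype $f_2$, i.e.\ $p_0=p_1=p_2=\tfrac13$, which is nowhere differentiable), whereas the paper's closing claim that ``the derivative is equal to $0$ almost everywhere'' is asserted without this hypothesis. Your final caveat is also well taken: at points where $N_1(k)-N_2(k)$ stays bounded the ratio $R_k$ does not tend to $0$, so the conjecture as literally phrased --- singularity ``at any $P_3$-irrational point'' --- cannot hold, and only an almost-everywhere statement (and only for $p_1\ne p_2$) is reachable by this method; your plan, once the Borel--Cantelli step is written out, would establish that corrected statement rigorously, which is more than the paper's heuristic does.
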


Let $c_1,c_2,\dots, c_m$ be a fixed 
ordered tuple of integers such that $c_r\in\{0,1,2\}$ for $r=\overline{1,m}$. 

\emph{A cylinder $\Lambda^{P_3} _{c_1c_2...c_m}$ of rank $m$ with base $c_1c_2\ldots c_m$} is the following set 
$$
\Lambda^{P_3} _{c_1c_2...c_m}\equiv\{x: x=\Delta^{P_3} _{c_1c_2...c_m i_{m+1}i_{m+2}\ldots i_{m+k}\ldots}\}.
$$
It is easy to see that  any cylinder $\Delta^Q _{c_1c_2...c_m}$ is a closed interval of the form
$$
\left[\Delta^{P_3} _{c_1c_2...c_m(0)}, \Delta^Q _{c_1c_2...c_m(2)}\right]=\left[\beta_{c_1}+\sum^{m} _{k=2}{\left(\beta_{c_r}\prod^{k-1} _{l=1}{p_{c_l}}\right)}, \beta_{c_1}+\sum^{m} _{k=2}{\left(\beta_{c_r}\prod^{k-1} _{l=1}{p_{c_l}}\right)}+\prod^{m} _{r=1}{p_{c_r}} \right].
$$
Whence,
$$
\left|\Lambda^{P_3} _{c_1c_2...c_m}\right|=\sup \Lambda^{P_3} _{c_1c_2...c_m}-\inf \Lambda^{P_3} _{c_1c_2...c_m}=\prod^{m} _{r=1}{p_{c_r}},
$$
where $|\cdot|$ is the Lebesgue measure of a set.
Then a value of the increment $\mu_f (\cdot)$ of the fucnction $f$ on a set can be calculated as following:
$$
\mu_f\left(\Lambda^{P_3} _{c_1c_2...c_m}\right)= f\left(\sup\Lambda^{P_3} _{c_1c_2...c_m}\right)- f\left(\inf \Lambda^{P_3} _{c_1c_2...c_m}\right)=\Delta^{P_3} _{\theta(c_1)\theta(c_2)...\theta(c_m)(1)}-\Delta^{P_3} _{\theta(c_1)\theta(c_2)...\theta(c_m)(0)}
$$
$$
=\prod^{m} _{r=1}{p_{\theta(c_r)}}\left(\beta_1+\beta_1p_1+\beta_1p^2 _1+\dots \right)=\frac{\beta_1}{1-p_1}\prod^{m} _{r=1}{p_{\theta(c_r)}}.
$$
Hence,
$$
\lim_{m\to\infty}{\frac{\mu_f\left(\Lambda^{P_3} _{c_1c_2...c_m}\right)}{\left|\Lambda^{P_3} _{c_1c_2...c_m}\right|}}=\frac{\beta_1}{1-p_1}\lim_{m\to\infty}{\frac{\prod^{m} _{r=1}{p_{\theta(c_r)}}}{\prod^{m} _{r=1}{p_{c_r}}}}=\frac{\beta_1}{1-p_1}\lim_{m\to\infty}{\prod^{m} _{r=1}{\frac{p_{\theta(c_r)}}{p_{c_r}}}}.
$$

Let us  discuss the other aproach. Choose $x_0=\Delta^{P_3} _{i_1i_2...i_{n_0-1}ii_{n_0+1}i_{n_0+2}i_{n_0+3}...}$, where $i_{n_0}=i$ is a fixed digit; then let us  consider  a sequence $(x_n)$ such that  $x_n=\Delta^{P_3} _{i_1i_2...i_{n_0-1}ji_{n_0+1}i_{n_0+2}i_{n_0+3}...}$.

Suppose $n_0=1, 2, 3, \dots$; then
$$
x_n-x_0=\left(\prod^{n_0-1} _{r=1}{p_{i_r}}\right)\times
$$
$$
\times\left(\beta_j -\beta_i+p_j\left(\beta_{i_{n_0+1}}+\sum^{\infty} _{k=n_0+2}{\left(\beta_{i_k}\prod^{k-1} _{t=n_0+1}{p_{i_t}}\right)}\right)-p_i\left(\beta_{i_{n_0+1}}+\sum^{\infty} _{k=n_0+2}{\left(\beta_{i_k}\prod^{k-1} _{t=n_0+1}{p_{i_t}}\right)}\right)\right)
$$
$$
=\left(\prod^{n_0-1} _{r=1}{p_{i_r}}\right)\left(\beta_j -\beta_i+(p_j-p_i)\left(\beta_{i_{n_0+1}}+\sum^{\infty} _{k=n_0+2}{\left(\beta_{i_k}\prod^{k-1} _{t=n_0+1}{p_{i_t}}\right)}\right)\right)
$$
$$
=\left(\prod^{n_0-1} _{r=1}{p_{i_r}}\right)\left(\beta_j -\beta_i+(p_j-p_i)\sigma^{n_0}(x_n)\right),
$$
$$
f(x_n)-f(x_0)=\left(\prod^{n_0-1} _{r=1}{\theta(p_{i_r})}\right)\left(\beta_{\theta(j)} -\beta_{\theta(i)}+(p_{\theta(j)}-p_{\theta(i)})\sigma^{n_0}(f(x_n))\right),
$$
and
$$
\lim_{x_n-x_0\to 0}{\frac{f(x_n)-f(x_0)}{x_n-x_0}}=\lim_{n_0\to\infty}{\frac{f(x_n)-f(x_0)}{x_n-x_0}}
$$
$$
=\lim_{n_0\to\infty}{\frac{\beta_{\theta(j)} -\beta_{\theta(i)}+(p_{\theta(j)}-p_{\theta(i)})\sigma^{n_0}(f(x_n))}{\beta_j -\beta_i+(p_j-p_i)\sigma^{n_0}(x_n)}\prod^{n_0-1} _{r=1}{\frac{p_{\theta(c_r)}}{p_{c_r}}}}
$$

In the first and second cases, we have a certain number or a bounded sequece which multiplied on 
$$
\prod^{k} _{r=1}{\frac{p_{\theta(c_r)}}{p_{c_r}}},
$$
where $k\to\infty$. Let us evaluate this. 

Suppose $s=0, 1, 2$ and  $N_s(x, k)$ is the number of the digit $s$ in the $k$ first digits of the $P_3$-representation of $x$. Also, 
$$
\lim_{k\to\infty}{\frac{N_s(x,k)}{k}}=\nu_s(x)
$$
is the frequency of the digit $s$ in the $P_3$-representation of $x\in [0,1]$.

So,
$$
\prod^{k} _{r=1}{\frac{p_{\theta(c_r)}}{p_{c_r}}}=\frac{p^{N_0(x, k)} _0 p^{N_2(x, k)} _1 p^{N_1(x, k)} _2}{p^{N_0(x, k)} _0 p^{N_1(x, k)} _1 p^{N_2(x, k)} _2}=\frac{p^{N_2(x, k)} _1 p^{N_1(x, k)} _2}{ p^{N_1(x, k)} _1 p^{N_2(x, k)} _2}=p^{N_2(x, k)-N_1(x, k)} _1 p^{N_1(x, k)-N_2(x, k)} _2
$$

Using (for example \cite{{Salem1943}}, etc.) Salem's techniques (and the fact that the Salem function is continuous and strictly increasing for positive $p_0, p_1, p_2$),  as well as (see \cite{{HW1979}, {PVB2011}} and references therein) a statement  that the set of normal numbers (numbers such that $\nu_s =p_s$ holds for all digit $s$ ) has the full Lebesgue measure, we get
$$
\lim_{k\to\infty}{\prod^{k} _{r=1}{\frac{p_{\theta(c_r)}}{p_{c_r}}}}=0.
$$

So, the derivative is equal to $0$ almost everywhere.

\section{Conclusions}

In the present article, a certain function $f$, which related to the classical Salem function $S$ and to some none-differentiable function $f_2$ (see Remark~\ref{Remark1}) for the case of the ternary represenation of numbers from $[0, 1]$, is investigated.

Finally, one can remark the following peculiarities in properties. Since  in terms of the ternary representation the digit operator $\theta_2$ generates a non-differentiable function, this operator preserves the singularity in terms of representations of numbers by the Salem  function. Such properties as the continuity, the monotonicity,  and  map properties are analogous.   Our function has other self-affine and self-similar properties, as well as  is a solution of a system of functional equations. In addition, positive  parameters $p_0+p_1+p_2=1$ of the Salem function affects on coefficients of the self-similiarity for $f$, as well as on the value of its Lebesgue integral. The last is equal to the value $\frac 1 2$ of the Lebesgue integral of $f_2$ whenever the condition $p_0=p_1=p_2=\frac 1 3$ holds.

\begin{center}
{\bf{Statements and Declarations}}

{{Competing Interests}}

\emph{The author states that there is no conflict of interest}
\end{center}

\end{document}